%% file: MAIN.tex
\documentclass[12pt,reqno]{amsart}					
\input{preamble.sty}

\title[Two Theorems in Weighted Intermediate Ricci Curvature]{Synge's Theorem and a sphere theorem in Weighted Intermediate Ricci Curvature}

\author{Hasan M. El-Hasan \and Curtis Pro}
\date{\today}
\address{Hasan M. El-Hasan, Department of Mathematics, University of California, Santa Barbara, Santa Barbara, CA 93106}
\email{elhasan@ucsb.edu}
\urladdr{https://www.hmelhasan.com/}

\address{Curtis Pro, Department of Mathematics, California State University, Stanislaus, Turlock, CA 95382}
\email{cpro@csustan.edu}

\begin{document}
\begin{abstract} Using the tools developed by Kennard, Wylie, and Yeroshkin for weighted sectional curvature bounds, we generalize a theorem of Wilhelm to show that Synge's Theorem holds for manifolds with density that have positive weighted intermediate Ricci curvature and a sufficiently large first systole. Furthermore, we provide a sphere theorem for compact manifolds with positive weighted intermediate Ricci curvature.
\end{abstract}

\maketitle

\section{Introduction}
\input{1-intro}

\section{Notation and Preliminary Content}\label{section preliminary}
\input{2-Preliminary}

\section{Generalized Synge's Theorem}\label{section synge}
\input{3-Synge}

\section{Sphere Theorem}\label{section sphere}
\input{4-Sphere}

\section{Example}\label{section example}
\input{5-Example}

\bibliographystyle{plain}
\bibliography{bibliography}

\end{document}

%% file: 1-Intro.tex
Recall that Synge's Theorem \cite{synge} says if $M$ is a closed Riemannian $n$-manifold with positive sectional curvature, then $M$ is simply connected if $n$ is even and $M$ is orientable, and $M$ is orientable if $n$ is odd. In \cite{wilhelm_intermediate_1997}, Wilhelm shows that a Riemannian manifold $M$ with positive Ricci curvature will also satisfy the conclusion of Synge's Theorem provided the first systole $\operatorname{sys}_1 M$, (i.e., the length of the shortest noncontractible loop in $M$) is sufficiently large. 

In fact, Wilhelm shows 
\begin{thm}[Wilhelm, \cite{wilhelm_intermediate_1997}]\label{wilhelm synge theorem} Let $(M^n,g)$ be a complete Riemannian manifold and let $k\in\{1,2,\ldots,n-1\}$ such that $\ric_k \geq k$ and $\sys M>\pi\sqrt{\frac{k-1}{k}}$. If $n$ is even and $M$ is orientable, then $M$ is simply connected. If $n$ is odd, then $M$ is orientable.
\end{thm}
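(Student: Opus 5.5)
We follow the classical Synge argument, with Wilhelm's averaging replacing the sectional curvature estimate. Suppose the conclusion fails. In the even-dimensional orientable case, take a nontrivial free homotopy class of loops. In the odd-dimensional case, take an orientation-reversing class. Since $\ric_k \geq k>0$ for $k\le n-1$ forces $\ric\geq k>0$ (each Ricci curvature is an average of $\ric_k$ values up to scaling), Myers' theorem makes $M$ compact. Hence we may minimize length in this class and obtain a smooth closed geodesic $\gamma$ of length $L$. This class contains noncontractible loops, so $L\geq \sys M>\pi\sqrt{(k-1)/k}$.

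Next, consider parallel transport $P$ around $\gamma$. It preserves $\dot\gamma$, so it acts on the orthogonal complement $\dot\gamma^\perp$, which has dimension $n-1$. In the even orientable case, $P$ is orientation preserving on a space of odd dimension $n-1$. In the odd case with $\gamma$ orientation reversing, $P$ reverses orientation on a space of even dimension $n-1$. In both cases linear algebra gives a unit vector $v\in\dot\gamma^\perp$ with $Pv=v$. The key point of Wilhelm's argument is to get more than one such field. Write $\dot\gamma^\perp=\mathrm{Fix}(P)\oplus W$. On $W$, $P$ acts without fixed vectors; decompose $W$ into $-1$ eigenspaces and rotation planes. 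We will use parallel fields that are not closed but rotate, and compensate with an angular factor. Concretely, for a rotation plane with angle $\theta$, take an orthonormal pair $E_1,E_2$ along $\gamma$. Rotate them linearly in $t$ by total angle $-\theta$ to get closed fields $V_1,V_2$ with $|V_i'|=|\theta|/L$. Similarly, a $-1$ eigendirection gets a Möbius-type closed field with $|V'|=\pi/L$ if we pair it appropriately.

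Now choose a $k$-dimensional family of closed normal fields along $\gamma$. Take the fixed field $v$ with $V'=0$, and $k-1$ further fields $V_2,\dots,V_k$, orthonormal at each time, each with $|V_i'|\le \pi/L$. For each we apply the second variation formula
\[
\sum_i E''(V_i)=\sum_i\int_0^L\bigl(|V_i'|^2-\langle R(V_i,\dot\gamma)\dot\gamma,V_i\rangle\bigr)dt .
\]
The curvature sum equals $\ric_k$ at each time for the orthonormal $k$-frame $\{V_i\}$ together with $\dot\gamma$ (normalizing $|\dot\gamma|=1$), so it is at least $k$. Thus
\[
\sum_i E''(V_i)\le (k-1)\frac{\pi^2}{L^2}L-kL<0
\]
exactly when $L>\pi\sqrt{(k-1)/k}$. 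Hence some $V_i$ gives a shortening variation within the free homotopy class, contradicting minimality.

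The main obstacle is the frame construction. We must ensure the $k-1$ extra fields can be chosen closed, everywhere orthonormal together with $v$, and each with $|V_i'|\le\pi/L$. The rotation angles lie in $(-\pi,\pi]$, so the linear interpolation costs at most $\pi/L$. A single $-1$ eigenvector cannot be closed by rotation within its own line. So we first pair $-1$ eigendirections into planes, rotating by angle $\pi$. If a lone $-1$ direction remains, we pair it with a second fixed vector. A parity count shows one exists: $\det P$ and $n-1$ determine the parity of $\dim\mathrm{Fix}(P)$. Checking this parity bookkeeping in both cases is where we expect the care to be needed.
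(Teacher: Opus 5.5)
Your argument works in the even orientable case. There $\det P=1$, so the $-1$-eigenspace of $P$ is even-dimensional and no lone direction occurs. It also works in the odd case when $k\le n-2$: you can simply discard the lone $-1$ direction, since $v$, the other fixed vectors, the rotation planes and the paired $-1$ directions already give $n-2\ge k$ closed orthonormal fields with the required derivative bound.

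The gap is the odd case with $k=n-1$, i.e.\ the Ricci case, already for $n=3$, $k=2$. Your remedy of pairing the lone $-1$ direction $w$ with a second fixed vector fails on two counts:
\begin{itemize}
\item The parity count only shows that $\dim\mathrm{Fix}(P)$ is odd, hence at least $1$. It can be exactly $1$: for $n=3$, $P$ is a reflection of the normal plane and $\mathrm{Fix}(P)=\mathbb{R}v$.
\item Even if a second fixed vector $u$ exists, $P$ acts on $\mathrm{span}(u,w)$ as a reflection, and no continuously rotating frame can undo that. You obtain at most one closed unit field in that plane, not two orthonormal ones.
\end{itemize}
More fundamentally, along an orientation-reversing loop the normal bundle of $\gamma$ is nonorientable. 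So $n-1$ closed, pointwise orthonormal normal fields do not exist at all, and the frame your estimate needs for $k=n-1$ cannot be built.

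The missing idea is to allow non-unit closed fields in the lone direction. Let $W$ be the parallel field with $W(0)=w$. Then $W(L)=-W(0)$, so both $\sin(\pi t/L)W(t)$ and $\cos(\pi t/L)W(t)$ are closed. The sum of their second variations is $\int_0^L\bigl(\pi^2/L^2-\sec(W,\dot\gamma)\bigr)\,dt$. So $W$ contributes exactly like a unit closed field with $|V'|=\pi/L$. Your total bound $(k-1)\pi^2/L-kL<0$ then survives, and some closed field is shortening.

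With this repair your route is correct and genuinely different from the paper's. In Proposition \ref{synge prop 1} (with $f\equiv 0$, $\kappa=k$) the paper never decomposes $P$. It uses only the closed parallel field $E$ to get $\int_0^L\sec(E,\dot\gamma)\,dt\le 0$. It then treats the other $k-1$ directions with parallel but non-closed fields on windows $[r,r+L]$. There it uses fixed-endpoint $\sin$ variations, plus a minimization over $(r,\{u_i\})$ that yields the matching $\cos^2$ estimate. Your version is more explicit and exhibits a concrete shortening closed field, at the price of the eigenvalue bookkeeping. The paper's argument works uniformly, whatever the structure of the holonomy.
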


Here, $\ric_k$ is the $k^{\text{th}}$ intermediate Ricci curvature of $M$, and $\ric_k \geq k$ means that for any choice $\{v,e_1,e_2,\ldots, e_k\}$ of orthonormal $(k+1)$-frame, we have 
\[ \sum_{i=1}^k \sec(v, e_i) \geq k. \]
Positive sectional curvature is equivalent to $\ric_1>0$, and positive Ricci curvature is equivalent to $\ric_{n-1}>0$. In general, $\ric_k>0$ is a stronger condition than $\ric_{k+1}>0$. 

Additionally, in \cite{Xia1997}, Xia generalizes the classic homeomorphic sphere theorem to manifolds with positive intermediate Ricci curvature:

\begin{thm}[Xia, \cite{Xia1997}]\label{xia sphere theorem}
    Let $(M^n,g)$ be a complete simply connected Riemannian manifold with $n\geq3$. If $M$ has $\ric_{\floor{\frac{n+1}{2}}}>\floor{\frac{n+1}{2}}$ and there exists a point $p\in M$ such that $\inj_p\geq \frac{\pi}{2}$, then $M$ is homeomorphic to $S^n$.
\end{thm}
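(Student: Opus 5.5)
We prove Xia's theorem by the classical Berger-type route. Among all points of $M$, the point $p$ with $\inj_p\geq \frac{\pi}{2}$ will serve as one "pole." We find a second pole $q$ and show that every point other than $p$ and $q$ lies within distance $<\frac{\pi}{2}$ of one of them. Then $M$ is covered by two open balls, each of which is a topological ball, and a standard argument shows $M$ is homeomorphic to $S^n$. In fact, we will show $M$ is a twisted sphere: it is the union of two closed disks, and Brown's generalized Schoenflies theorem or Morse theory on $d(p,\cdot)$ gives the homeomorphism.

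Set $k=\floor{\frac{n+1}{2}}$. First, $\ric_k>k$ implies $\ric_{n-1}>n-1$, so Myers' theorem gives $M$ compact with $\diam M<\pi$.

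Choose $q$ with $d(p,q)=\max_x d(p,x)$. The core claim is: for any $x\in M$, either $d(p,x)<\frac{\pi}{2}$ or $d(q,x)<\frac{\pi}{2}$. Equivalently, $d(p,\cdot)$ has no critical points other than $p$ and $q$ outside $B(p,\frac{\pi}{2})$, together with a covering statement.

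To prove the claim, suppose $x$ satisfies $d(p,x)\geq\frac{\pi}{2}$ and $d(q,x)\geq \frac{\pi}{2}$. Consider minimal geodesics from $x$ to $p$ and from $x$ to $q$, along with the geodesic triangle $p,q,x$. We use the intermediate-Ricci version of Toponogov-type comparison developed for $\ric_k$: Wilhelm's index estimate for geodesics of length $>\pi\sqrt{(k-1)/k}$ normalized to curvature $>1$, or Guijarro–Wilhelm's comparison.

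The key index fact is the following. If $\ric_k>k$ and $\gamma$ is a geodesic of length $\geq\frac{\pi}{2}$ whose endpoints lie on submanifolds with suitable second fundamental form, then any $k$-dimensional family of variation fields built from parallel fields times $\cos t$ or $\sin t$ contains a variation decreasing length.

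Apply this to variations of the segment $px$ keeping $p$ fixed, using fields of the form $\sin(t)E_i$. The condition $\inj_p\geq\frac{\pi}{2}$ prevents conjugate points before $\frac{\pi}{2}$. Hence the set $S$ of directions at $x$ of minimal geodesics to $p$ has "codimension at most $k-1$" in a suitable sense: any $(n-k+1)$-plane of directions at $x$ meets the cone of directions decreasing $d(p,\cdot)$.

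The analogous statement holds for $q$, using that $q$ is a maximum of $d(p,\cdot)$. Since $k+k\geq n+1$, the two "half-space" families of directions must intersect. This produces a direction at $x$ in which both $d(p,\cdot)$ and $d(q,\cdot)$ strictly decrease, or, via the first variation, a contradiction with the maximality of $d(p,q)$ once a triangle comparison is applied.

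Once the claim holds, $d(p,\cdot)$ is regular on $M\setminus(\{p\}\cup\{q\})$ outside the ball. Grove–Shiohama critical point theory then shows $M$ is a union of two disks glued along $S^{n-1}$, hence homeomorphic to $S^n$. Simple connectivity and $n\geq3$ are used here to exclude the degenerate gluing and to apply the generalized Poincaré/Schoenflies step.

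The main obstacle is the dimension-counting step. We must show that the $\ric_k$ condition with $k=\floor{\frac{n+1}{2}}$ is exactly what is needed so that the two families of "distance-decreasing" directions at $x$, from $p$ and from $q$, intersect nontrivially. This requires the precise form of the index comparison with endpoint conditions, which is where $\inj_p\geq\frac{\pi}{2}$ enters. We would first attempt this using Wilhelm's Lemma on $k$-dimensional spaces of Jacobi-type fields, adapted to the focal setting.
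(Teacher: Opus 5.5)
There is a genuine gap, and it sits exactly at the step you flag. The two-pole (Berger/Grove--Shiohama) strategy runs on Toponogov-type triangle comparison. That is a sectional-curvature ($\ric_1$) phenomenon and is not available under $\ric_k>k$ for $k\geq 2$. The index estimate you invoke sums the second variation of $\sin(t)E_i$ over $k$ orthonormal parallel fields along a segment from $p$ of length $\geq\frac{\pi}{2}$. It gives only second-order information at the endpoint $x$: every $k$-plane normal to the segment contains a direction along which the second variation of $d(p,\cdot)$ is negative. It says nothing about the angle at $x$ in the triangle $p,q,x$.

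Your dimension count also cannot produce a contradiction. To first order, directions in which both $d(p,\cdot)$ and $d(q,\cdot)$ decrease always exist: two open half-spaces of $T_xM$ intersect unless the minimal geodesics from $x$ to $p$ and to $q$ leave $x$ in exactly opposite directions. So exhibiting such a direction contradicts nothing, and the closing appeal to ``a triangle comparison'' is the missing theorem rather than a step. A critical-point argument needs the opposite: a direction at each $x\neq p,q$ along which $d(p,\cdot)$ \emph{increases}. That is precisely what Toponogov supplies in Grove--Shiohama. Even granting the covering $M=B(p,\frac{\pi}{2})\cup B(q,\frac{\pi}{2})$, nothing controls $\inj_q$, so the second ball need not be a topological ball. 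The covering by itself also does not rule out critical points of $d(p,\cdot)$.

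The idea you are missing is to apply your index computation on a distance sphere around $p$, not at an arbitrary far point, and then to replace comparison geometry by Morse theory. This is Xia's argument, which the paper carries out in weighted form for Theorem B.

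\begin{enumerate}
\item Since $M$ is compact (your Myers step), $\ric_k>k+\delta$ for some $\delta>0$. Pick $l$ with $\frac{\pi}{2}\sqrt{k/(k+\delta)}<l<\inj_p$.
\item This is where $\inj_p\geq\frac{\pi}{2}$ really enters: it makes $\partial B(p,l)$ a smooth $S^{n-1}$ bounding a disk. Excluding conjugate points, the role you assigned it, is irrelevant for a second-variation upper bound.
\item The fields $\sin(\frac{\pi t}{2l})U_i$ along segments from $p$ give $\sum_{i=1}^k\II(u_i,u_i)>\frac{4(k+\delta)l^2-\pi^2k}{8l}>0$. So $M\setminus B(p,l)$ has $k$-convex boundary.
\item Wu's theorem then says $M\setminus B(p,l)$ is homotopy equivalent to $S^{n-1}$ with cells of dimension $\geq n-k+1=\floor{\frac{n}{2}}+1$ attached. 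Hence $\pi_i(M)=0$ for $i\leq\floor{\frac{n}{2}}$.
\item Hurewicz and Poincar\'e duality make $M$ a homotopy sphere, and the generalized Poincar\'e conjecture finishes.
\end{enumerate}

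No second pole and no triangle comparison is needed.
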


In this paper, we generalize Theorems \ref{wilhelm synge theorem} and \ref{xia sphere theorem} to compact manifolds with positive weighted intermediate Ricci curvature, which is defined in Definition \ref{pos wgt int ricci defn} after the following review of weighted curvature.

Weighted curvature is defined for \textbf{smooth metric measure spaces} $(M,g,\mu)$. Here, $(M,g)$ is a Riemannian manifold, and $\mu$ is a smooth measure on $M$. Throughout this paper, we will assume that $\mu = e^{-(n+1)f}\dvol_g$, where $f:M\to\R$ is a smooth function, and we refer to the space $(M,g,\mu)$ as $(M,g,f)$. 

Kennard, Wylie, and Yeroshkin generalize sectional curvature to the weighted setting (see \cites{wylie_weighted_2015,wylie_pinching_2015,WY_weighted_2016,KW_weighted_2017,KWY_weighted_2019}). More precisely, in \cite{KWY_weighted_2019}, weighted sectional curvature is defined as follows:

\begin{defn}\label{defn_of_weighted_sectional}
Let $(M,g)$ be a Riemannian manifold with sectional curvature $\sec$. Let $f:M\to\R$ be a smooth function. Given orthonormal vector fields $U$ and $V$ on $M$, the weighted sectional curvature of the smooth metric measure space $(M,g,f)$ is
    \begin{equation}\label{weighted-sectional}
        \sec^f (U,V) := \sec(U,V) + \hess_f(U,U) + \odif{f}(U)^2.
    \end{equation}
    We say that $(M,g,f)$ has \textbf{positive weighted sectional curvature} if for all orthonormal vector fields $U,V$, we have $\sec^f (U,V) > 0$.
\end{defn}

\begin{remark}
This formulation of weighted sectional curvature was originally denoted by $\overline{\sec}^f$ in \cites{KW_weighted_2017,KWY_weighted_2019,wylie_weighted_2015}, and defined as ``strongly weighted sectional curvature.'' The bar was meant to distinguish from a different formulation that the authors call ``weakly weighted sectional curvature,'' which is not used in this document.
\end{remark}

In \cite{WY_weighted_2016}, Wylie and Yeroshkin introduce the \textbf{weighted Levi--Civita connection}, defined by
\begin{equation}\label{weighted levi civita}
    \nabla^f_X Y := \nabla_X Y - \odif{f}(X)Y - \odif{f}(Y)X,
\end{equation}
which is the unique torsion-free connection with the same geodesics as the ordinary Levi--Civita connection, $\nabla$, up to reparameterization and such that $\mu$ is parallel. Indeed, it induces its own weighted curvature tensor
\begin{equation}\label{weighted curvature tensor}
    R^f(X,Y)Z := \nabla^f_X \nabla^f_Y Z - \nabla^f_Y \nabla^f_X Z -\nabla^f_{[X,Y]} Z,
\end{equation}
and the sectional curvature associated with it is precisely Equation \eqref{weighted-sectional}.

In their papers, Kennard, Wylie, and Yeroshkin include generalizations of Synge's theorem (\cite{KW_weighted_2017}) and the homeomorphic sphere theorem (\cite{wylie_pinching_2015}) for positive weighted sectional curvature. Motivated by recent advances in the study of intermediate and weighted curvatures, weighted intermediate Ricci curvature has emerged as a new topic of interest in the literature (\cites{lee-wei-2026,lin-wang-2026,ralphxu}). Positive weighted intermediate Ricci curvature is defined as follows:

\begin{defn}\label{pos wgt int ricci defn}
    A smooth metric measure space $(M,g,f)$ has \textbf{positive weighted k$^{\text{th}}$-intermediate Ricci curvature}, denoted $\ric^f_k > 0$, if for all unit vector fields $U$ and all orthonormal sets $ \{V_1,\cdots,V_k\} $ in $ U^\perp $, we have
    \[ \sum_{i=1}^k \sec^f (U,V_i) > 0. \]
\end{defn}

Our generalizations of Theorems \ref{wilhelm synge theorem} and \ref{xia sphere theorem} are as follows:

\begin{thmalph}\label{main thm A}
    Let $(M^n,g,f)$ be a closed smooth metric measure space, and let $\kappa>0$ and $k\in\{1,2,\ldots,n-1\}$ such that
    \begin{equation*}
    \ric^f_k \geq \kappa e^{-4f}
    \qquad\text{and}\qquad
    \sysf(M) >\pi\sqrt{\frac{k-1}{\kappa}}.  
    \end{equation*}
    If $n$ is even and $M$ is orientable, then $M$ is simply connected. If $n$ is odd, then $M$ is orientable.
\end{thmalph} 

\begin{thmalph}\label{main thm B}
    Let $(M^n,g,f)$ be a closed, simply-connected smooth metric measure space with $n\geq4$. If $M$ satisfies 
    \[\ric^f_{\floor{\frac{n+1}{2}}}>\floor{\frac{n+1}{2}}e^{-4f}\]
    and there exists a point $p\in M$ such that 
    \[\injf_p\geq \frac{\pi}{2},\]
    then $M$ is homeomorphic to $S^n$.
\end{thmalph}

The quantity $\floor{x}$ rounds down a real number $x$ to the nearest integer. The quantities $\sysf M$ and $\injf_p$ are reparameterized versions of the first systole and injectivity radius, respectively. They are formally defined in Subsections \ref{weighted first systole subsec} and \ref{weighted inj rad subsec}.

Note that both Theorems \ref{main thm A} and \ref{main thm B} require $M$ to be closed, rather than just complete. This is to ensure that $f$ is bounded, as well as the existence of the minimizing geodesics and uniform curvature bounds. 

In Section \ref{section preliminary}, we review the weighted versions of common geometric concepts including geodesics, the second fundamental form, and variations of energy. Sections \ref{section synge} and \ref{section sphere} contain the proofs to Theorems \ref{main thm A} and \ref{main thm B}, respectively. In \ref{section example}, we provide an example of a rotationally symmetric metric on $S^n$ that has positive weighted $2^{\text{nd}}$-intermediate Ricci curvature but not non-negative $2^{\text{nd}}$-intermediate Ricci curvature.

\begin{custom}{Acknowledgment}
    The authors are grateful to Will Wylie and Lee Kennard for their advice during the early stages of this project.
\end{custom}

\begin{custom}{AI Disclaimer}
    ChatGPT-6 Astra was used only to proofread the final version of this document.
\end{custom}

%% file: 2-Preliminary.tex
\subsection{Geodesic Reparameterization and Conformal Change Metrics}\label{reparam subsec}
Following with the notation introduced in \cite{KWY_weighted_2019}, to differentiate between geodesics with respect to either connection, we say that a geodesic of $\nabla$ is a $\boldsymbol{g}$\textbf{-geodesic}, and a geodesic of $\nabla^f$ is an $\boldsymbol{f}$\textbf{-geodesic}. Since $\nabla^f$ is projectively equivalent to $\nabla$, the geodesics of either connection are the same up to reparameterization. If $c:[a,b]\rightarrow M$ is a unit speed $g$-geodesic parameterized by arc-length, then $\gamma(s(t)):=c(t)$, where 
\begin{equation}\label{s parameter}
    s(t)=\int^t_a e^{-2f(c(\tau))}\odif{\tau}
\end{equation}
is its corresponding $f$-geodesic and is said to have \textbf{standard parameterization}. We write $c'(t)$ to denote the derivative with respect to $t$ of a $g$-geodesic $c(t)$, and $\dot\gamma(s)$ to denote the derivative with respect to $s$ of a $f$-geodesic $\gamma(s)$. In particular, the two parameterizations are related as follows:
\begin{equation}\label{gamma dot c prime equation}
    \dot\gamma = e^{2f}c'.
\end{equation}

Define $\len_g(c|_{[0,a]})$ to be the $\boldsymbol{g}$\textbf{-length} from $t=0$ to $t=a$ of a path $c(t)$ over $M$.
Notice that if $h = e^{-4f}g$ is a conformal change metric of $g$, then the standard parameter length of an $f$-geodesic equals the $h$-length of its underlying $g$-geodesic because 
\begin{align}\len_h(c|_{[0,a]}) &= \int_0^a |c'(t)|_h\dt\\ &= \int_0^ae^{-2f(c(t))}|c'(t)|_g\dt\nonumber\\
&= \int_0^a e^{-2f(c(t))}\dt,\nonumber
\end{align}
which is the length of the $f$-geodesic corresponding to $c(t)$, parameterized on the interval $[0,s(a)]$.

\subsection{Variations of Energy}\label{energy subsec}
Let $c:[0,l]\to M$ be a $g$-geodesic, and let $\gamma:[0,L]\to M$ be the associated $f$-geodesic with standard parameterization $s$. Suppose $U$ is a vector field such that $U\perp\dot\gamma$ along $\gamma$. Proposition 4.1 and Remark 4.2 of \cite{KWY_weighted_2019} provide the index form written in terms of the $f$-geodesic $\gamma$:

\begin{equation}\label{weighted index form}
        I(e^fU,e^fU) = \int_0^L 
        \left(\abs{\nabla_{\dot\gamma}U}^2 - g(R^f(U,\dot\gamma)\dot\gamma, U)\right)\ds +
        \left.\odif{f}(\dot\gamma)\abs{U}^2\right|_0^L.
    \end{equation}

Let $C:(-\epsilon,\epsilon)\times[0,l]\to M$ be a smooth variation of $c$ satisfying $\left. \odv[order=1]{C}{r} \right|_{r=0}=e^{f}U$, where $r$ is the varying parameter. Then the second variation of energy for $C$ is
\begin{equation}\label{weighted energy variation}
\begin{split}
    \left.\odv[order=2]{\cal{E}(C)}{r}\right|_{r=0}  
 = & \int_0^L 
        \left(\abs{\nabla_{\dot\gamma}U}^2 - g(R^f(U,\dot\gamma)\dot\gamma, U)\right)\ds \\
        & + \left.\odif{f}(\dot\gamma)\abs{U}^2\right|_{s=0}^{s=L}
        + \left.g\left( \odv[order=2]{C}{r}(0,t), c'(t) \right)\right|_{t=0}^{t=l}
\end{split}
\end{equation}

\subsection{Second Fundamental Form}
Let $H$ be a codimension 1 hypersurface of $(M^n,g,f)$ with normal vector $N$, and let $X,Y\in TH$. The \textbf{weighted shape operator} is 
\begin{equation}\label{shape operator a}
    S^f(X) = (\nabla^f_X N)^\mathbf{T}.
\end{equation}
where $^\mathbf{T}$ refers to the tangential component of the underlying vector field. Then
\begin{equation}\label{shape operator b}
    S^f(X) = S(X) -\odif{f}(N)X
\end{equation}
since 
\[\nabla^f_X N = \nabla_X N -\odif{f}(X)N -\odif{f}(N)X \]
and the tangential component of $\odif{f}(X)N$ is zero. Thus, the \textbf{weighted second fundamental form} is defined to be
\begin{equation}\label{weighted II}
    \II^f(X,Y) = \II(X,Y) -\odif{f}(N)g(X,Y).
\end{equation}

For some $k\in\{1,\cdots,n-1\}$, $H$ is said to be $\mathbf{k}$\textbf{-convex} if for every $q\in H$ and every orthonormal $k$-frame $\{e_1,\cdots,e_k\}\in T_qH$,
\begin{equation}\label{k-convex}
    \sum_{i=1}^k \II(e_i,e_i) >0.
\end{equation}
$H$ is said to be \textbf{weighted }$\mathbf{k}$\textbf{-convex} if the above inequality also holds for $\II^f$.

\subsection{Reparameterized First Systole}\label{weighted first systole subsec}
Let $\cal C$ be the set of all noncontractible closed curves on $M$.
We define the {reparameterized first systole} of $(M,g,f)$ to be
\begin{equation}\label{weighted first systole defn}
\begin{split}
    \sysf M & = \inf_{c\in\cal C}\left\{ \int_{c(t)} \abs{c'(t)}_g e^{-2f(c(t))}\dt \right\}\\
    & = \sys(M,h),
\end{split}
\end{equation}
where $h$ is the conformally warped metric $h=e^{-4f}g$.

\subsection{Reparameterized Injectivity Radius}\label{weighted inj rad subsec}
For any $(p,v)\in TM$, let $c_v(t)=\exp_p(tv)$. We define the {reparameterized injectivity radius} of $(M,g,f)$ at $p$ to be

\begin{equation}\label{weighted inj rad defn}
    \injf_p = \sup_{r\in(0,\inj_p)}\inf_{\{v\in T_p M \,:\, \abs{v}=1\}}\left\{
    \len_h\left(\left.c_v\right|_{[0,r]}\right)
    \right\},
\end{equation}
where $h$ is the conformally warped metric $h=e^{-4f}g$.


%% file: 3-Synge.tex
\begin{defn}
    For a smooth metric measure space $(M,g,f)$, define 
    \[\kappa(f,k) := \inf_M \left(e^{4f}\ric^f_k\right)\] to be the infimum of $e^{4f}\ric^f_k$ over all points and orthonormal frames.
\end{defn}

Theorem \ref{main thm A} is restated below:

\begin{thm}
    Let $(M^n,g,f)$ be a closed smooth metric measure space, and let $\kappa>0$ and $k\in\{1,2,\ldots,n-1\}$ such that
    \begin{equation*}
    \ric^f_k \geq \kappa e^{-4f}
    \qquad\text{and}\qquad
    \sysf(M) >\pi\sqrt{\frac{k-1}{\kappa}}.  
    \end{equation*}
    If $n$ is even and $M$ is orientable, then $M$ is simply connected. If $n$ is odd, then $M$ is orientable.
\end{thm}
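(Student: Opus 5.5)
Following Synge's original argument and Wilhelm's adaptation, we argue by contradiction. Suppose the conclusion fails. In the even, orientable case, pick a nontrivial free homotopy class. In the odd case, pick an orientation-reversing loop class. Because $M$ is closed, the class contains a closed curve minimizing $h$-length, where $h=e^{-4f}g$. This curve is a closed $h$-geodesic, but it is more convenient to view it as a closed $g$-geodesic $c$ with its $f$-geodesic reparameterization $\gamma:[0,L]\to M$. Here $L=\len_h(c)\ge \sysf M>\pi\sqrt{(k-1)/\kappa}$.

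The caveat is that $h$-minimizers are $h$-geodesics, not $g$-geodesics. The cleaner route is therefore to minimize the $g$-energy within the class and then use the weighted second variation formula \eqref{weighted energy variation} along that $g$-geodesic. The systole hypothesis only enters through the $h$-length of the minimizer, which is $\ge\sysf M$ in either case.

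Parallel transport around $c$ preserves $\dot\gamma^\perp$ and is orientation preserving in case (i) and orientation reversing in case (ii). In both cases the dimension count of Synge's lemma gives a fixed unit vector in $\dot\gamma^\perp$. Wilhelm's refinement is the step we need. Consider the $(n-1)$-dimensional space $\dot\gamma^\perp$ with the holonomy $P$. Decompose it into $P$-invariant planes and eigenlines. We then obtain either a $k$-dimensional family of closed fields, or fields whose holonomy is a rotation, which can be "unwound" with a linear rotation in the parameter. This produces orthonormal parallel fields $E_1,\dots,E_k\perp\dot\gamma$ along $\gamma$ with $\sum|\nabla_{\dot\gamma}E_i|^2$ controlled. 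Concretely, $E_1$ is exactly closed, and $E_2,\dots,E_k$ are closed after applying a rotation by at most angle $\pi$ in pairs. The cost of this rotation is at most $(k-1)\pi^2/L^2$ per unit parameter in total.

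We plug $U_i=E_i$ (modified) into \eqref{weighted energy variation} with variation fields $e^{f}U_i$, using geodesic variations $r\mapsto\exp(re^fU_i)$ so that the endpoint acceleration term vanishes. The boundary term $\odif f(\dot\gamma)|U_i|^2\big|_0^L$ cancels because the fields are closed and $c$ is a closed geodesic. Summing over $i$ gives
\[
\sum_{i=1}^k \mathcal E_i''(0)\le \int_0^L\Big(\frac{(k-1)\pi^2}{L^2}-\ric^f_k(\dot\gamma)\Big)\,ds .
\]
Here $g(R^f(U,\dot\gamma)\dot\gamma,U)=|\dot\gamma|^2\sec^f(\dot\gamma,U)$ and $|\dot\gamma|^2=e^{4f}$ by \eqref{gamma dot c prime equation}. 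So the curvature hypothesis gives $\sum_i\sec^f\cdot|\dot\gamma|^2\ge\kappa$. The integrand is therefore at most $(k-1)\pi^2/L^2-\kappa$, which is negative since $L>\pi\sqrt{(k-1)/\kappa}$. Thus some variation strictly decreases the energy within the homotopy class, contradicting minimality.

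\textbf{Main obstacle.} The hardest step is the linear algebra producing the $k$ almost-closed fields with total rotation cost $(k-1)\pi^2/L^2$, where exactly one field is genuinely closed. This must be carried out in the weighted setting. The issue is whether the rotation term really contributes $\int|\nabla_{\dot\gamma}U|^2\,ds=\theta^2/L$ with $\theta\le\pi$ in the $s$-parameter, given that $\nabla$ rather than $\nabla^f$ appears in \eqref{weighted energy variation}. This should be fine because the formula is written with the $g$-Levi-Civita derivative along $\dot\gamma$, and $g$-parallel transport is unchanged by reparameterization.

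A second point needs checking. The curve must simultaneously be an energy minimizer, so that second variation is nonnegative, and have $h$-length $\ge\sysf M$. The latter is automatic, since every noncontractible loop has $h$-length at least $\sysf M$.
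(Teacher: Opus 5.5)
There is a genuine gap in the step you flag as the main obstacle. You claim that one can always find closed, pointwise orthonormal fields $E_1,\dots,E_k\perp\dot\gamma$, one parallel and the others unwound by rotations of angle at most $\pi$. This is false when $n$ is odd, $c$ is orientation-reversing, and $k=n-1$.

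In that case $\det P=-1$ on $\dot\gamma^\perp$, so the eigenvalue $-1$ has odd multiplicity. After you pair the $(-1)$-eigenvectors into rotation-by-$\pi$ planes, one parallel field $Y$ with $Y(L)=-Y(0)$ is left over, and no rotation closes it up. This is a topological obstruction, not a bookkeeping issue: $n-1$ closed, pointwise orthonormal fields would trivialize the normal bundle of the loop, which is non-orientable here. So at most $n-2$ such fields exist.

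Your construction does work in the other cases:
\begin{itemize}
\item when $n$ is even, since all $(-1)$-eigenvectors pair up;
\item when $n$ is odd and $k\le n-2$, provided you choose the fields away from $Y$.
\end{itemize}
But $k=n-1$ is part of the theorem and cannot be reduced to a smaller $k$, because a lower bound on $\ric^f_{n-1}$ gives no lower bound on $\ric^f_{n-2}$.

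The missing idea is that the test fields need not be unit or orthonormal. What matters is that their squared coefficients in each direction add up to $1$. For the leftover line, use the two fields $\cos(\pi s/L)Y$ and $\sin(\pi s/L)Y$, both closed because $Y(L)=-Y(0)$. The sum of their second variations is $\int_0^L (\pi^2/L^2-g(R^f(Y,\dot\gamma)\dot\gamma,Y))\,ds$, which is exactly what a unit field with rotation cost $\pi^2/L^2$ would contribute. With this substitution your summation argument closes, with total cost at most $(k-1)\pi^2/L$.

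The paper avoids the holonomy decomposition altogether. It uses only the single closed parallel field $E$. It treats the remaining $k-1$ directions with fields $\sin(\pi(s-r)/L)U_i$ that vanish at a base point $\gamma(r)$, so closedness is never needed. It then gets the complementary $\cos^2$ estimate by minimizing over base points and frames and shifting the interval by $L/2$.

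Your other two concerns are fine: a rotation linear in $s$ does cost $\theta^2/L$ in the formula, and the $g$-minimizer has $h$-length at least $\sysf M$.
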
 

Our proof of Theorem \ref{main thm A} is split between the following two propositions.

\begin{prop}\label{synge prop 1}
    Let $(M^n,g,f)$ be a closed smooth metric measure space, and let $c:[0,l]\to M$ be a noncontractible, closed $g$-geodesic that is parameterized by arc-length and locally minimizes length with respect to $g$ on the free loop space of $M$. Suppose that parallel translation of the normal space $(c')^{\perp}$ around $c$ fixes a unit vector. Then for all $k\in\{1,2,\cdots,n-1\}$, \[\kappa(f,k)L^2\leq (k-1)\pi^2,\]
    where 
    \[L= \len_h(c) =\int_0^l e^{-2f(c(t))}\dt,\]
    and $h=e^{-4f}g$. 
\end{prop}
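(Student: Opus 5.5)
Following Wilhelm's argument, we will produce $k$ variation fields along $c$ whose total second variation of energy is negative whenever $\kappa(f,k)L^2>(k-1)\pi^2$. This contradicts the assumption that $c$ locally minimizes length, hence energy, in the free loop space. Reparameterize $c$ as the $f$-geodesic $\gamma:[0,L]\to M$ with standard parameter $s$, so that $L=\len_h(c)$ by Subsection \ref{reparam subsec}. Let $E$ be the unit vector in $(c')^\perp$ fixed by parallel translation around $c$. Then $E$ extends to a closed $g$-parallel unit field along $\gamma$, orthogonal to $\dot\gamma$. Complete $E$ to an orthonormal frame $\{E=E_1,E_2,\dots,E_k\}$ of $k$ vectors in $\dot\gamma^\perp$ at $\gamma(0)$, and parallel translate these along $\gamma$. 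For $i\geq2$ the fields $E_i$ need not close up, so we cut them off with $\sin(\pi s/L)$, which vanishes at the endpoints.

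Concretely, we use $U_1=E_1$ and $U_i=\sin(\pi s/L)E_i$ for $i=2,\dots,k$. Each $e^fU_i$ is a closed (continuous) variation field along the loop $c$, and we take the variations $C_i(r,t)=\exp_{c(t)}(re^{f}U_i)$. Since each variation curve $r\mapsto C_i(r,t)$ is a $g$-geodesic, the term $g(\partial_r^2C,c')$ in \eqref{weighted energy variation} vanishes. The boundary term $\odif{f}(\dot\gamma)\abs{U}^2|_0^L$ also cancels: for $U_1$ the endpoints coincide and $\abs{U_1}=1$ at both ends, and for $i\ge2$ we have $U_i=0$ at both ends. Using \eqref{weighted energy variation} and summing, we obtain
\[
\sum_{i=1}^k \left.\odv[order=2]{\cal E(C_i)}{r}\right|_{r=0}
=\int_0^L\Big((k-1)\tfrac{\pi^2}{L^2}\cos^2(\pi s/L) - \sin^2(\pi s/L)\sum_{i\ge2}\sec^f(\dot\gamma,E_i)\abs{\dot\gamma}^2-\sec^f(\dot\gamma,E_1)\abs{\dot\gamma}^2\Big)\ds .
\]
Here we use $\nabla_{\dot\gamma}E_i=0$, and $g(R^f(U,\dot\gamma)\dot\gamma,U)=\abs{U}^2\abs{\dot\gamma}^2\sec^f(\dot\gamma,U/|U|)$ with $\dot\gamma$ in the first slot, which is the form matching Definition \ref{defn_of_weighted_sectional}.

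Next we bound the curvature. By \eqref{gamma dot c prime equation}, $\abs{\dot\gamma}^2=e^{4f}$. Since $\sin^2\le1$, we have $\sec^f(\dot\gamma,E_1)\geq \sin^2(\pi s/L)\sec^f(\dot\gamma,E_1)$ provided $\sec^f(\dot\gamma,E_1)\geq0$. That sign is not guaranteed, so we handle it differently. Rather than cutting off $E_1$, we choose which of the $k$ fields is left uncut so as to average: apply the argument to every choice $j$ of uncut vector, or note that $\cos^2=1-\sin^2$ lets us rewrite the sum. The cleanest route, and the one we follow, is Wilhelm's: write the integrand as
\[(k-1)\tfrac{\pi^2}{L^2}\cos^2 - e^{4f}\sin^2\sum_{i=1}^k\sec^f(\dot\gamma,E_i) - e^{4f}\cos^2\sec^f(\dot\gamma,E_1).\]
The middle sum is at least $\kappa(f,k)$, so after integrating, the middle term contributes at most $-\kappa(f,k)L/2$, while the first term contributes $(k-1)\pi^2/(2L)$. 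The last term is the obstacle, since $\sec^f(\dot\gamma,E_1)$ may be negative. To absorb it, we will instead take $U_1=E_1$ together with $U_i=\sin(\pi s/L)E_i$ and $\tilde U_i=\cos(\pi s/L)E_i$ for $i\ge2$. The fields $\tilde U_i$ are not closed, so we instead use the pair $\sin(\pi s/L)E_i$ and $\sin(\pi s/L+\pi/2)$-shifted versions only when $E_i$ also closes up. Failing that, the fallback is to use the $(k-1)$-dimensional average over rotated frames $\{E_1\}\cup$ (a rotation of $E_2,\dots,E_k$ inside a $k$-frame containing $E_1$), which exploits that $\ric^f_k\geq\kappa e^{-4f}$ holds for every $k$-frame.

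Granting the estimate, the sum of second variations is at most $\frac{L}{2}\big((k-1)\pi^2/L^2-\kappa(f,k)\big)$. This is negative when $\kappa(f,k)L^2>(k-1)\pi^2$, so some $C_i$ strictly decreases energy, and hence length, contradicting local minimality of $c$. The main obstacle is exactly this handling of the uncut closed field $E_1$ whose weighted sectional curvature has no individual sign. I expect this to require a careful choice of which vector is left uncut, possibly using that the full sum with $\sin^2+\cos^2=1$ weights reconstructs a complete $k$-frame sum at each $s$.
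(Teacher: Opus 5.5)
\textbf{There is a genuine gap.} The inequality you introduce with ``Granting the estimate'' is the whole content of the proposition, and your set-up gives you no way to prove it.

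Write $a(s)=g(R^f(E,\dot\gamma)\dot\gamma,E)$ (your $e^{4f}\sec^f(\dot\gamma,E_1)$); it is $L$-periodic. You add the second variation of the closed field $E_1=E$ to the sine cut-off variations. But that second variation equals $-\int_0^L a\,ds$, which is $\ge 0$ by minimality, so it can only raise the total. After you use the $k$-frame bound, your claimed bound needs $\int_0^L\cos^2(\pi s/L)\,a(s)\,ds\ge 0$, which is a \emph{lower} bound on the $\cos^2$-weighted integral of $a$. Nothing you have supplies one. The only information about $a$ is the \emph{upper} bound $\int_0^L a\,ds\le 0$. With the cut-off anchored at $s=0$, nothing stops $a$ from being negative near $s=0$ and $s=L$, where $\cos^2\approx 1$. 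Your fallbacks do not help either: rotating $E_2,\dots,E_k$ does not touch this term, and the $L/2$-shifted cut-offs that would give $\cos^2$ weights see a different frame on $[L,3L/2]$ than on $[0,L/2]$ unless the $E_i$ close up.

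\textbf{The missing idea.} Use the closed field only as a constraint, not as a summand, and exploit freedom in the base point of the cut-off. Fix any $r$ and any parallel orthonormal $U_1,\dots,U_{k-1}$ along $\gamma|_{[r,r+L]}$ orthogonal to $E$ and $\dot\gamma$. The index forms of $\sin(\pi(s-r)/L)U_i$ are nonnegative, and pointwise $\sum_i g(R^f(U_i,\dot\gamma)\dot\gamma,U_i)\ge\kappa-a$. Together these give
\[
\frac{\kappa L}{2}-\int_r^{r+L}\sin^2\Big(\frac{\pi(s-r)}{L}\Big)a(s)\,ds\le\frac{(k-1)\pi^2}{2L}.
\]
Since $a$ is periodic, the average over $r\in[0,L]$ of the integral here is $\frac12\int_0^L a\,ds\le 0$. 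So some $r$ gives $\kappa L^2\le (k-1)\pi^2$.

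\textbf{The paper's route.} The paper reaches the same conclusion differently. It turns $\int a\le 0$ into $\sum_i\int_r^{r+L}g(R^f(U_i,\dot\gamma)\dot\gamma,U_i)\,ds\ge\kappa L$ and proves the $\sin^2$-weighted upper bound $(k-1)\pi^2/(2L)$. It then obtains the matching $\cos^2$-weighted bound by choosing $(r_0,\{u_i\})$ to minimize a half-interval functional and comparing with the frame transported once around the loop. Adding the two bounds finishes the proof.
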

\begin{proof}
For brevity, we write $\kappa=\kappa(f,k)$, so that $\ric^f_k\geq \kappa e^{-4f}$. Let $\gamma:[0,L]\to M$	be the corresponding $f$-geodesic to $c$ with standard parameterization. Extend both $c$ and $\gamma$ periodically to smooth maps $c:\R\to M$ and $\gamma:\R\to M$, with periods $l$ and $L$, respectively. By our hypothesis, there exists a unit vector field $E(t)\in T_{c(t)}M$ that is parallel and orthogonal to $c'(t)$, with period $l$. Let 
\[C(a,t)=\exp_{c(t)}(a e^{f(c(t))} E(t))\]
be a smooth variation of $c(t)$. Since $c$ locally minimizes length in the free loop space of $M$, we have by \eqref{weighted energy variation} that
\begin{equation}\label{new synge second var energy}
    \begin{split}
    0 \leq \left.\odv[order=2]{\cal{E}(C)}{a}\right|_{a=0} 
    & = \int_0^{L} 
    \left(\abs{\nabla_{\dot\gamma}E}^2 - g(R^f(E,\dot\gamma)\dot\gamma, E)\right)\ds\\
    & =\int_0^{L} - g(R^f(E,\dot\gamma)\dot\gamma, E)\,\ds.
    \end{split}
\end{equation}

The boundary terms 
\[\left.df(\dot\gamma)|E|^2\right|_{s=0}^{s=L}
\qquad\text{and}\qquad
\left.g\left( \odv[order=2]{C}{a}(0,t), c'(t) \right)\right|_{t=0}^{t=l}
\]
from \eqref{weighted energy variation} are identically zero due to periodicity, and thus are absent in \eqref{new synge second var energy}. The integrand is also $L$-periodic, so for all $r\in\R$,
\begin{equation}
\int_r^{r+L}
g\left(R^f(E,\dot\gamma)\dot\gamma,E\right)\ds
\leq 0.
\label{new-synge-periodic-field}
\end{equation}

Fix $r\in\mathbb R$, and let $\{U_i(t)\}_{i=1}^{k-1}$  be any orthonormal $(k-1)$-frame of parallel vector fields along $c$ in $c'^\perp \cap E^\perp$. Since $\dot\gamma=e^{2f}c'$ and $\ric^f_k\geq\kappa e^{-4f}$, we have
\[
g\left(R^f(E,\dot\gamma)\dot\gamma,E\right)
+ \sum_{i=1}^{k-1}
g\left(R^f(U_i,\dot\gamma)\dot\gamma,U_i\right)
\geq\kappa.
\]
Then integrating the previous display over $[r,r+L]$ yields
\begin{equation}
\sum_{i=1}^{k-1}\int_r^{r+L}
g\left(R^f(U_i,\dot\gamma)\dot\gamma,U_i\right)\ds
\geq\kappa L,
\label{new-synge-curvature-integral}
\end{equation}
where the absence of the term $g\left(R^f(E,\dot\gamma)\dot\gamma,E\right)$ is due to \eqref{new-synge-periodic-field}. Along $\gamma|_{[r,r+L]}$, define
\[
V_i(s)=\sin\left(\frac{\pi(s-r)}L\right)U_i(s),
\]
which vanish at both endpoints. Due to the local length minimality of $c$, we have by \eqref{weighted index form} that
\begin{align*}
0
&\le\sum_{i=1}^{k-1}I\left(e^{f(\gamma(s))}V_i(s),e^{f(\gamma(s))}V_i(s)\right)\\
&=\sum_{i=1}^{k-1}\int_r^{r+L}
\left(
|\nabla_{\dot\gamma}V_i|^2
-g\left(R^f(V_i,\dot\gamma)\dot\gamma,V_i\right)
\right)\ds\\
&=\sum_{i=1}^{k-1}\int_r^{r+L}
\left[
\frac{\pi^2}{L^2}\cos^2\left(\frac{\pi(s-r)}L\right)
-\sin^2\left(\frac{\pi(s-r)}L\right)
g\left(R^f(U_i,\dot\gamma)\dot\gamma,U_i\right)
\right]\ds\\
&=\frac{(k-1)\pi^2}{2L}
-\sum_{i=1}^{k-1}\int_r^{r+L}
\sin^2\left(\frac{\pi(s-r)}L\right)
g\left(R^f(U_i,\dot\gamma)\dot\gamma,U_i\right)\ds.
\end{align*}
In particular, 
\begin{equation}
\sum_{i=1}^{k-1}\int_r^{r+L}
\sin^2\left(\frac{\pi(s-r)}L\right)
g\left(R^f(U_i,\dot\gamma)\dot\gamma,U_i\right)\ds
\le\frac{(k-1)\pi^2}{2L}
\label{new-synge-sine-estimate}
\end{equation}
for all $r\in \R$ and every such parallel frame. 

We claim that there exists a choice of $r$ and $\{U_i\}_{i=1}^{k-1}$ so that 
\begin{equation}
\sum_{i=1}^{k-1}\int_r^{r+L}
\cos^2\left(\frac{\pi(s-r)}L\right)
g\left(R^f(U_i,\dot\gamma)\dot\gamma,U_i\right)\ds
\le\frac{(k-1)\pi^2}{2L}.
\label{new-synge-cosine-estimate}
\end{equation}
Adding \eqref{new-synge-sine-estimate} and \eqref{new-synge-cosine-estimate}
yields
\[\sum_{i=1}^{k-1}\int_r^{r+L}
g\left(R^f(U_i,\dot\gamma)\dot\gamma,U_i\right)\ds
\leq\frac{(k-1)\pi^2}{L},\]
and by \eqref{new-synge-curvature-integral}, we have
\[
\kappa L \leq \frac{(k-1)\pi^2}{L}.
\]
Our desired inequality is the result of multiplying $L$ to both sides.

It remains to show \eqref{new-synge-cosine-estimate}. Define parallel transport along $\gamma$ by
\[
P_a^b:T_{\gamma(a)}M\to T_{\gamma(b)}M.
\] 
Let $\mathcal F$ be the bundle over $[0,L]$ whose fiber at $r$ are the orthonormal
$(k-1)$-frames in $\dot\gamma(r)^\perp\cap E(r)^\perp$. On $\cal F$, define 
\[ \mathcal I(r,\{u_i\}):= 
\sum_{i=1}^{k-1}\int_r^{r+L/2}
\cos^2\left(\frac{\pi(s-r)}L\right)
g\left( R^f(P_r^su_i,\dot\gamma)\dot\gamma, P_r^su_i \right)\ds.
\]
Since $\mathcal F$ is compact, we can define $(r_0,\{u_i\})$ to be where $\mathcal I$ attains its minimal value.
Then we choose
\[
U_i(s)=P_{r_0}^su_i,
\]
and by the periodicity of $\gamma$ and $E$, 
\[
\left\{P_{r_0}^{r_0+L}u_i\right\}_{i=1}^{k-1}
\]
is also an orthonormal frame in
$\dot\gamma(r_0)^\perp\cap E(r_0)^\perp$. Therefore,
\[
\mathcal I(r_0,\{u_i\})
\leq \mathcal I\left( r_0,\left\{P_{r_0}^{r_0+L}u_i\right\}
\right).
\]
Expanding, this inequality becomes
\begin{equation}
\begin{aligned}
&\sum_{i=1}^{k-1}\int_{r_0}^{r_0+L/2}
\cos^2\left(\frac{\pi(s-r_0)}L\right)
g\left(R^f(U_i,\dot\gamma)\dot\gamma,U_i\right)\ds\\
&\qquad\le
\sum_{i=1}^{k-1}\int_{r_0+L}^{r_0+3L/2}
\cos^2\left(\frac{\pi(s-r_0)}L\right)
g\left(R^f(U_i,\dot\gamma)\dot\gamma,U_i\right)\ds.
\end{aligned}
\label{new-synge-half-interval}
\end{equation}
Adding to both sides the common integral over $[r_0+L/2,r_0+L]$ yields
\begin{equation}
\begin{aligned}
&\sum_{i=1}^{k-1}\int_{r_0}^{r_0+L}
\cos^2\left(\frac{\pi(s-r_0)}L\right)
g\left(R^f(U_i,\dot\gamma)\dot\gamma,U_i\right)\ds\\
&\qquad\le
\sum_{i=1}^{k-1}\int_{r_0+L/2}^{r_0+3L/2}
\cos^2\left(\frac{\pi(s-r_0)}L\right)
g\left(R^f(U_i,\dot\gamma)\dot\gamma,U_i\right)\ds.
\end{aligned}
\label{new-synge-interval-comparison}
\end{equation}

Lastly, we apply \eqref{new-synge-sine-estimate} on the interval $[r_0+L/2,r_0+3L/2]$, using our choice of parallel fields $U_i$. Since
\[
\sin^2\left(\frac{\pi(s-(r_0+L/2))}L\right)
=\cos^2\left(\frac{\pi(s-r_0)}L\right),
\]
we have that
\[
\sum_{i=1}^{k-1}\int_{r_0+L/2}^{r_0+3L/2}
\cos^2\left(\frac{\pi(s-r_0)}L\right)
g\left(R^f(U_i,\dot\gamma)\dot\gamma,U_i\right)\ds
\le\frac{(k-1)\pi^2}{2L}.
\]
Combining the above display with \eqref{new-synge-interval-comparison} proves \eqref{new-synge-cosine-estimate}, which completes the proof.
\end{proof}


\begin{prop}\label{synge prop 2}
Let $(M^n,g,f)$ be a closed smooth metric measure space, and let $k\in\{1,\cdots,n-1\}$. Suppose
\[
\ric^f_k\geq \kappa e^{-4f}
\]
for some $\kappa>0$. For each nontrivial free homotopy class $[\alpha]$ of closed curves in $M$, let $c_\alpha:[0,l_\alpha]\to M$ be a closed $g$-geodesic parametrized by arc length which minimizes length in $[\alpha]$. Let $\gamma_\alpha:[0,L_\alpha]\to M$ be its corresponding $f$-geodesic with standard parametrization such that
\[
L_\alpha :=\len_h(c_\alpha)
> \pi\sqrt{\frac{k-1}{\kappa}},
\]
where $h=e^{-4f}g$.
If $n$ is even and $M$ is orientable, then $M$ is simply connected. If $n$ is odd, then $M$ is orientable.
\end{prop}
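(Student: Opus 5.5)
We argue by contradiction, following the classical Synge argument and using Proposition \ref{synge prop 1} as the curvature input. Suppose the conclusion fails. In the even-dimensional orientable case, $\pi_1(M)\neq 0$, so there is a nontrivial free homotopy class $[\alpha]$. In the odd-dimensional case, $M$ is non-orientable, so there is an orientation-reversing loop, and we take $[\alpha]$ to be its free homotopy class (nontrivial, since a contractible loop preserves orientation).

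Since $M$ is closed, the standard length-minimization argument in the free loop space gives a closed $g$-geodesic $c_\alpha:[0,l_\alpha]\to M$, parametrized by arc length, which minimizes $g$-length in $[\alpha]$. This is the curve in the hypothesis. In particular $c_\alpha$ is noncontractible and locally minimizes length on the free loop space, as required in Proposition \ref{synge prop 1}.

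Next we check that parallel translation $P$ once around $c_\alpha$, restricted to the normal space $(c_\alpha')^\perp$, fixes a unit vector. The map $P$ is an isometry of $T_{c_\alpha(0)}M$ fixing $c_\alpha'(0)$, since $c_\alpha$ is a closed geodesic. So it restricts to an orthogonal map of the $(n-1)$-dimensional space $(c_\alpha')^\perp$, with $\det P|_{(c_\alpha')^\perp}=\det P$.
\begin{itemize}
\item If $n$ is even and $M$ is orientable, then $P\in SO(n-1)$ with $n-1$ odd. Its eigenvalues are $\pm1$ together with unit complex conjugate pairs. An odd-dimensional special orthogonal map therefore has eigenvalue $1$.
\item If $n$ is odd and $c_\alpha$ reverses orientation, then $\det P|_{(c_\alpha')^\perp}=-1$ on an even-dimensional space. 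The non-real eigenvalues come in conjugate pairs, so the real eigenvalues number an even count with product $-1$. Hence both $1$ and $-1$ occur, and again there is a fixed unit vector.
\end{itemize}
This linear algebra step is the only genuinely topological input; it is classical and carries over unchanged.

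Now apply Proposition \ref{synge prop 1} to $c_\alpha$. It gives $\kappa(f,k)L_\alpha^2\le (k-1)\pi^2$. The hypothesis $\ric^f_k\ge\kappa e^{-4f}$ means $\kappa(f,k)\ge\kappa>0$, so
\[
L_\alpha\le \pi\sqrt{\frac{k-1}{\kappa(f,k)}}\le\pi\sqrt{\frac{k-1}{\kappa}}.
\]
This contradicts the assumption $L_\alpha>\pi\sqrt{(k-1)/\kappa}$. When $k=1$ the contradiction is even more direct, since the inequality forces $\kappa L_\alpha^2\le0$.

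The main point of care is ensuring the hypotheses of Proposition \ref{synge prop 1} match exactly. The minimizer is taken with respect to $g$-length, while the length bound is on $L_\alpha=\len_h(c_\alpha)$. This mismatch is harmless: Proposition \ref{synge prop 1} only requires $g$-local minimality and measures $L$ in $h$, exactly as here.
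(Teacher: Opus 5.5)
Your proof is correct and follows the paper's argument. You choose the free homotopy class the same way in each case, use the same linear-algebra fact (an orthogonal map of $\mathbb{R}^{n-1}$ with determinant $(-1)^n$ fixes a unit vector, which you verify by counting eigenvalues), and apply Proposition \ref{synge prop 1} in the same way. Your remark that $\kappa(f,k)\ge\kappa$ spells out a step the paper leaves implicit when it writes $\kappa L_\alpha^2\le (k-1)\pi^2$.
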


\begin{proof}
Towards a contradiction, suppose that $n$ is even and $M$ is orientable but not simply connected, or $n$ is odd and $M$ is nonorientable. If $n$ is even, let $[\alpha]$ be any nontrivial free homotopy class, which must preserve orientation since $M$ is assumed to be orientable. If $n$ is odd, then $M$ being nonorientable implies the existence of an orientation-reversing loop, and we choose $[\alpha]$ to be its free homotopy class. $[\alpha]$ is then necessarily nontrivial. In both cases, it suffices to show that $L_\alpha \leq \pi\sqrt{\frac{k-1}{\kappa}}$, a contradiction.

Since $c_\alpha$ minimizes length in $[\alpha]$, it also locally minimizes length on the free loop space of $M$. A general result of linear algebra states that every orthogonal linear transformation of $\R^{n-1}$ with $\det A = (-1)^{n}$ fixes a nonzero vector of $\R^{n-1}$. Let $P:c_\alpha'(0)^\perp\to c_\alpha'(0)^\perp$ denote parallel transport around $c_\alpha$, restricted to vectors in $c_\alpha'(0)^\perp$. This map satisfies 
\[\det P=(-1)^n,\]
so $P$ fixes a nonzero vector in $c_\alpha'(0)^\perp$. Then $c_\alpha$ satisfies the hypothesis of Proposition \ref{synge prop 1}, and so
\[
\kappa L_\alpha^2
\leq (k-1)\pi^2.
\]
Dividing by $\kappa>0$ and taking square roots proves the result.
\end{proof}

Theorem \ref{main thm A} follows as a corollary of Proposition \ref{synge prop 2}, since $\ric^f_k\geq\kappa e^{-4f}$ and $\sysf M>\pi\sqrt{\frac{k-1}{\kappa}}$ implies the hypothesis of Proposition \ref{synge prop 2}, since
\[L_\alpha \geq \sysf M.\]

%% file: 4-Sphere.tex
Theorem \ref{main thm B} is restated below:
\begin{thm}
    Let $(M^n,g,f)$ be a closed, simply-connected smooth metric measure space with $n\geq4$. If $M$ satisfies 
    \[\ric^f_{\floor{\frac{n+1}{2}}}>\floor{\frac{n+1}{2}}e^{-4f}\]
    and there exists a point $p\in M$ such that 
    \[\injf_p\geq \frac{\pi}{2},\]
    then $M$ is homeomorphic to $S^n$.
\end{thm}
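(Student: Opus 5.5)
Following Xia's strategy for Theorem \ref{xia sphere theorem}, the plan is to show that $M$ is covered by two topological balls, namely a small neighborhood of the point $p$ with $\injf_p\geq\frac{\pi}{2}$ and a neighborhood of the complement of the open $h$-ball of radius $\pi/2$ about $p$ (with $h=e^{-4f}g$). By the weighted analogue of Brown's/generalized Schoenflies argument (as in Kennard--Wylie \cite{wylie_pinching_2015}), this yields a homeomorphism to $S^n$. Put $m=\floor{\frac{n+1}{2}}$. Throughout, distances and balls are measured in the reparameterized sense: $f$-geodesics from $p$ with standard parameterization have parameter length equal to $h$-length, and $\injf_p\ge\pi/2$ means every $g$-geodesic from $p$ is minimizing at least until its $h$-length reaches $\pi/2$. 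Hence the set $B=\{\exp_p(tv): \len_h(c_v|_{[0,t]})<\pi/2\}$ is an open ball, diffeomorphic to $\R^n$ via $\exp_p$ up to radial reparameterization.

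The first step is to show that the complement $K=M\setminus B$ is contractible, or at least that $M\setminus\{$a point of $B\}$ deformation retracts appropriately. I would do this with Morse theory on the path space $\Omega_{q}$ of paths from points near the boundary, following the Xia/Wilhelm style. Take $q$ in $K$ and consider $f$-geodesics (equivalently $g$-geodesics) from $q$ to the "sphere" $H=\partial B$ that realize the distance. The hypersurface $H$ is weighted $m$-convex, with the index-lowering directions coming from the boundary term in \eqref{weighted energy variation}. To see this, use $\injf_p\ge\pi/2$ and the Riccati/index comparison along radial $f$-geodesics from $p$ with test fields $e^f\sin(s)U_i$. Here the weighted index form \eqref{weighted index form} together with $\ric^f_m>m e^{-4f}$, which is in unit-$\dot\gamma$ terms $\sum \sec^f>m$ along the standard parameterization, gives $\sum_{i=1}^m \II^f(e_i,e_i)$ of the right sign, as in the comparison with the round hemisphere.

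The second step is the index estimate. Any geodesic $\sigma$ of $h$-length at least $\pi/2$ leaving $K$ orthogonally to $H$, or any $f$-geodesic loop or segment between points of $K$ through $B$ of length exceeding $\pi$, has index at least $n-m\ge \frac{n-1}{2}$. This is shown by the same computation as in Proposition \ref{synge prop 1}. Choose parallel fields $U_1,\dots,U_{n-1}$ normal to $\dot\gamma$ and use variation fields $e^{f}\cos(s)U_i$ or $e^f\sin(\pi s/L)U_i$. Sum over any $m$ of them using $\ric^f_m>m e^{-4f}$, where the $e^{-4f}$ factor exactly converts to the standard parameter via $\dot\gamma=e^{2f}c'$. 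Then a pigeonhole/averaging argument over $m$-subsets of an $(n-1)$-frame produces a negative-definite subspace of dimension $n-m$. The weighted boundary terms $\odif f(\dot\gamma)|U|^2$ are absorbed into $\II^f$ via \eqref{weighted II}, which is exactly why the weighted second fundamental form was introduced.

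The third step combines these. Since $n-m\ge 2$ for $n\ge4$ (and $\pi_1(M)=0$), Morse theory shows that $K$ has the homotopy type relevant to making $M$ a homotopy sphere, with $K$ contractible in $M$ and in fact a collared topological ball after shrinking. The union $M=B\cup(\text{neighborhood of }K)$ then gives a homotopy sphere which is a twisted union of two balls, and hence is homeomorphic to $S^n$. The main obstacle I expect is the second step: verifying that the weighted boundary terms and the reparameterization factor $e^{-4f}$ combine cleanly so that the strict inequality $\ric^f_m>me^{-4f}$ yields strict negativity of the index form on an $(n-m)$-dimensional space. In particular, $f$ is only bounded, not constant, so the comparison must be done in the standard parameter $s$ rather than in $t$. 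I would first try to handle this exactly as in \eqref{new synge second var energy}, by choosing variations $e^{f}U$ so that all $f$-dependence collapses into $R^f$ and endpoint terms.
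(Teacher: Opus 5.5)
Your proposal has a genuine gap in Steps 2 and 3, where weighted convexity is supposed to become topology. You never say which path space the Morse theory runs on or what its critical points are. The index estimates you list (geodesics of $h$-length at least $\pi/2$ leaving $K$, loops or segments through $B$ of length $>\pi$) are not the ones that control the topology of $K$. The conclusion that $K$ is contractible and ``a collared topological ball after shrinking'' is then just asserted. Path-space Morse theory gives only homotopy-theoretic information, so nothing here puts a ball structure on $K$, and the two-ball/Brown argument has nothing to stand on.

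The missing ingredient is Lemma \ref{Wylie-Xu lemma} (Xu's weighted version of Wu's theorem), applied to the compact manifold with boundary $K$. Since $\ric^f_m>0$ and $\partial K$ is weighted $m$-convex, $K$ is homotopy equivalent to $S^{n-1}$ with cells of dimension $\ge n-m+1=\floor{n/2}+1$ attached. The index estimate behind that lemma concerns chords of $K$ orthogonal to $\partial K$ at both ends. It uses the constant-coefficient fields $e^fU_i$, whose endpoint terms are $-e^{2f}\II^f$, and it needs no length hypothesis; the value $\pi/2$ enters only in proving convexity.

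From there, $M=K\cup\overline{B(p,l)}$ is $\floor{n/2}$-connected. Hurewicz and Poincar\'e duality make $M$ a homology sphere, hence a homotopy sphere since it is simply connected, and the generalized Poincar\'e conjecture gives the homeomorphism. So the relevant numerology is not $n-m\ge 2$ but $n-m=\floor{n/2}$. Connectivity up to half the dimension is what lets Poincar\'e duality finish, and that is exactly why $m=\floor{\frac{n+1}{2}}$.

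There is also a fixable problem in your Step 1. Your $H=\partial B$ is a level set of $h$-length along radial $g$-geodesics, not a $g$-distance sphere. The radial geodesics are therefore in general neither orthogonal to $H$ nor length-minimizing from $p$ to $H$, so the second variation with $e^f\sin(s)U_i$ does not compute $\II^f$ of $H$. Moreover, $\injf_p\ge\pi/2$ is a supremum over $r<\inj_p$, so your $H$ may reach the cut locus of $p$.

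The paper avoids both issues as follows. It uses compactness to upgrade the hypothesis to $\ric^f_m>(m+\delta)e^{-4f}$. It then takes an honest $g$-distance sphere $\partial\overline{B(p,l)}$ with $l<\inj_p$, on which every radial $h$-length $L$ exceeds $\frac{\pi}{2}\sqrt{m/(m+\delta)}$. The fields $e^f\sin(\pi s/2L)U_i$ then give $\sum_{i=1}^m\II^f(u_i,u_i)>0$ on the boundary of $K=M\setminus B(p,l)$, which is exactly the input Lemma \ref{Wylie-Xu lemma} needs.
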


The proof of Theorem \ref{main thm B} relies on the following result of Xu (Theorem 1.2, \cite{ralphxu}), which generalizes Theorem 1 in \cite{Wu1987} to manifolds with positive weighted intermediate Ricci curvature:

\begin{lemma}[Xu, \cite{ralphxu}]\label{Wylie-Xu lemma}
Let $(M^n,g,f)$ be a compact smooth metric measure space with nonempty boundary $\partial M$ and let $k\in\{1,\cdots,n-1\}$. If $M$ has $\ric^f_k>0$ and $\partial M$ is weighted $k$-convex, then $M$ has the homotopy type of a CW complex obtained from $\partial M$ by attaching a finite number of cells each of dimension $\geq n-k+1$.
\end{lemma}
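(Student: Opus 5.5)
The approach follows Wu's Morse-theoretic proof of Theorem 1 in \cite{Wu1987}, with the weighted index form \eqref{weighted index form} in place of the Riemannian one. Let $\Omega$ be the space of piecewise smooth paths $c:[0,1]\to M$ with $c(0),c(1)\in\partial M$, and let $\cal E$ be the energy functional on $\Omega$. Its critical points are the constant paths at points of $\partial M$ (the minimum set, a copy of $\partial M$) and the nonconstant $g$-geodesic chords meeting $\partial M$ orthogonally at both ends. Since $\partial M$ is weighted $k$-convex, it is in particular mean-convex in the weighted sense. I would use this, together with a standard perturbation (or an application of the Lemma to a slightly shrunken collar), to say that $\cal E$ satisfies Palais--Smale. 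Minimizing sequences then stay away from $\partial M$ except at the endpoints, so the usual Morse theory applies, possibly after perturbing to nondegeneracy.

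The central step is an index estimate: every nonconstant critical chord $c$ has index at least $n-k$. Let $\gamma:[0,L]\to M$ be the associated $f$-geodesic. Take the $(n-1)$-dimensional space of parallel fields $E$ along $\gamma$ orthogonal to $\dot\gamma$. Because $c$ is orthogonal to $\partial M$ at both ends, each $E$ is tangent to $\partial M$ at both endpoints. For the variation $a\mapsto \exp_{c(t)}(a\,e^{f}E)$, formula \eqref{weighted energy variation} gives
\[
\int_0^L -g(R^f(E,\dot\gamma)\dot\gamma,E)\,\ds + \left.\odif{f}(\dot\gamma)\abs{E}^2\right|_0^L + \left.g\!\left(\odv[order=2]{C}{a},c'\right)\right|_0^l .
\]
The last term is the second fundamental form term $-\II(E,E)$ at the endpoints, with $N$ the inward normal, which is $\pm c'$ there. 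The $\odif{f}(\dot\gamma)$ term combines with it, exactly as in \eqref{weighted II}, to give $-\II^f(E(0),E(0))-\II^f(E(L),E(L))$; the factor $e^{2f}$ relating $\dot\gamma$ and $c'$ only rescales this positively. Now suppose a subspace $W$ of these fields of dimension $k$ is orthonormal. Summing over an orthonormal basis of $W$ gives a strictly negative total. The interior curvature term is negative because $\ric^f_k>0$, and each endpoint term is negative because $\partial M$ is weighted $k$-convex. Hence the second variation is negative on some vector of every $k$-dimensional subspace of the $(n-1)$-dimensional space. By a standard linear algebra lemma (as in Wu and Sha), the negative index is then at least $(n-1)-(k-1)=n-k$.

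Morse theory now shows that $\Omega$ is obtained from the constant paths $\partial M$ by attaching cells of dimension $\ge n-k$. Hence $\pi_i(\Omega,\partial M)=0$ for $i\le n-k-1$. The fibration relating $\Omega$ to the path space of the pair gives $\pi_{i}(M,\partial M)\cong\pi_{i-1}(\Omega,\partial M)$, so $\pi_i(M,\partial M)=0$ for $i\le n-k$. Since $(M,\partial M)$ is a compact CW pair, the standard cell-trading argument (Wall, or the relative Hurewicz/Whitehead construction used by Wu) then yields a homotopy equivalence from $M$ to $\partial M$ with finitely many cells attached, all of dimension $\ge n-k+1$.

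I expect the main obstacle to be the endpoint bookkeeping in the index estimate. One has to check carefully that the boundary terms in \eqref{weighted energy variation}, once written in the $f$-parameterization, combine precisely into $\II^f$ with the correct sign and positive factors $e^{cf}$. Mixing $t$- and $s$-derivatives could otherwise introduce extra $\odif{f}$ terms. I would first verify this on a single endpoint, using $\nabla^f$ and \eqref{shape operator b}.
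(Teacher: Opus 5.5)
The paper does not prove this lemma. It imports it as Theorem~1.2 of \cite{ralphxu}, a weighted version of Wu's Theorem~1 in \cite{Wu1987}, so your argument has to stand on its own.

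\textbf{What works.} Your index estimate is sound, and the step you were worried about goes through. You should use variations whose endpoint curves stay in $\partial M$, i.e.\ the Hessian of $\mathcal{E}$ on $T_c\Omega$. The exponential variation $\exp_{c(t)}(a e^{f}E)$ leaves $\partial M$, and its acceleration term is zero. Take $N$ to be the outward normal, as in Section~\ref{section sphere}. Then $\dot\gamma(0)=-e^{2f}N$ and $\dot\gamma(L)=e^{2f}N$. So each end of \eqref{weighted energy variation} contributes $e^{2f}\bigl(\odif{f}(N)-\II(E,E)\bigr)=-e^{2f}\II^f(E,E)$. Your linear-algebra step then gives index at least $n-k$ for every orthogonal chord. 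The topological steps afterwards, $\pi_i(M,\partial M)\cong\pi_{i-1}(\Omega,\partial M)$ and relative cell trading, are also fine.

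\textbf{The gap.} The genuine gap is the claim that the only critical points are constants and orthogonal chords, so that ``the usual Morse theory applies.''
\begin{itemize}
\item For $k\ge 2$, weighted $k$-convexity allows $\II$ to be negative in some tangent directions. For example, the solid torus of revolution in $\R^3$ with radii $R>2r$ has $2$-convex boundary, yet it is concave along its inner equator.
\item A path running along $\partial M$ in such a direction is pushed out of $M$ by the negative gradient flow of $\mathcal{E}$. So the space of paths in $M$ is not invariant under the flow.
\item The constrained problem can then have extra critical points: $C^{1,1}$ paths containing arcs that are geodesics of $\partial M$ in directions with $\II\le 0$. Your index argument says nothing about them.
\end{itemize}
None of the justifications you offer closes this:
\begin{itemize}
\item Weighted mean convexity is a barrier for hypersurfaces, not for curves. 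A curve is barred only by convexity in its own direction, which is the case $k=1$.
\item Palais--Smale is not the issue.
\item Applying the lemma to a shrunken collar is circular.
\item Passing to an open extension of $M$ fails too, because chords can then leave $M$, where there is no curvature control.
\end{itemize}
So the cell structure of $\Omega$ relative to $\partial M$ is not established, and this is the technical heart of the result.

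\textbf{What is needed.} You need an actual device here. One option is to show that the boundary-contact paths can be deformed away, or that they also have index $\ge n-k$. The other is to avoid path space altogether. One possible way around is a Riccati argument for $\rho=d_g(\cdot,\partial M)$:
\begin{itemize}
\item Let $\check S=e^{2f}\bigl(\nabla^2\rho-\odif{f}(\nabla\rho)\,g\bigr)$, viewed as an endomorphism of $\nabla\rho^{\perp}$. This is $e^{2f}$ times the tangential Hessian of $\rho$ for the conformal metric $e^{-2f}g$.
\item Along normal geodesics, in the standard parameter $s$, it satisfies $\nabla_{\dot\gamma}\check S+\check S^2+R(\cdot,\dot\gamma)\dot\gamma+\bigl(\hess_f(\dot\gamma,\dot\gamma)+\odif{f}(\dot\gamma)^2\bigr)\mathrm{Id}=0$, with $\check S(0)=-e^{2f}\II^f$.
\item Hence $\ric^f_k>0$ and weighted $k$-convexity keep every $k$-trace of $\check S$ negative.
\item One would still have to handle the cut locus, for instance with support functions and smoothing. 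Morse theory of a suitable $\Phi(\rho)$ should then have interior critical points of index $\ge n-k+1$. That would give the lemma directly, with finitely many cells.
\end{itemize}
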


\begin{proof}[Proof of Theorem \ref{main thm B}]
For brevity, we write $k=\floor{\frac{n+1}{2}}$. Since $M$ is compact, there exists a $\delta>0$ such that 
\[\ric^f_{k} > \left(k+\delta\right)e^{-4f}.\]

For any $l\in(0,\inj_p)$, the open $g$-geodesic ball $B(p,l)$ centered at $p$ with radius $l$ is homeomorphic to the interior of the disc $D^n$ and $\partial \overline{B(p,l)}$ is homeomorphic to $S^{n-1}$. In particular, we choose an $l$ such that

\begin{equation}\label{sphere l bound}
    \inf_{\{v\in T_p M \,:\, \abs{v}=1\}}
    \left\{\int_0^l e^{-2f(\exp_p(vt))}\dt\right\} >\frac{\pi}{2}\sqrt{\frac{k}{k+\delta}}.
\end{equation}

To show that $M$ is homeomorphic to $S^n$, we must fit $M$ into the hypothesis of Lemma $\ref{Wylie-Xu lemma}$, therefore it suffices to show that $M\setminus B(p,l)$ has weighted $k$-convex boundary. For any $q\in\partial \overline{B(p,l)}$, let $c:[0,l]\to M$ be a normal $g$-geodesic such that $c(0)=p$, $c(l)=q$, and $c'(l)=-N$, where $N$ is the outward unit normal vector of $M\setminus B(p,l)$. Let $\gamma:[0,L]\to M$ be its corresponding $f$-geodesic with standard parameterization, where 

\begin{equation}\label{sphere L bound}
    L=\int_0^l e^{-2f(c(t))}\dt > \frac{\pi}{2}\sqrt{\frac{k}{k+\delta}}.
\end{equation}
Then $\dot{\gamma}(L)=e^{2f(q)}(-N)$. Notice that the value of $L$ depends on the choice of $q$, and that the following computations are meant to be realized for all geodesics from $p$ to each $q\in\partial \overline{B(p,l)}$. Each $L$ shares the qualities described in \eqref{sphere L bound}.

Choose an orthonormal $k$-frame $\{u_1,\cdots,u_k\}\subset T_q\partial \overline{B(p,l)}$ and let $\{U_1(t),\cdots,U_k(t)\}$ be their parallel translation backwards along $c(t)$. Then for each $i\in\{1,\cdots,k\}$, we have
\[U_i(l)=u_i,\qquad \nabla_{c'}U_i=0, \qquad\text{and}\qquad U_i(t)\perp c'(t). \]
Along $\gamma$, we define the following two fields:
\begin{equation}\label{sphere V field}
    {V}_i(s) = \sin\left(\frac{\pi s}{2L}\right){\oU}_i(s),
\end{equation}
and
\begin{equation}\label{sphere W field}
\begin{split}
    {W}_i(s) & = e^{f(\gamma(s))}{V}_i(s) \\
    & = e^{f(\gamma(s))}\sin\left(\frac{\pi s}{2L}\right){\oU}_i(s).
\end{split}
\end{equation}
Thus we have
\[{V}_i(0)={W}_i(0)=0, \qquad{V}_i(L)=u_i, \qquad\text{and}\qquad{W}_i(L)=e^{f(q)}u_i.\]
For each $i\in\{1,\cdots,k\}$, choose a smooth variation $C_i:(-\epsilon,\epsilon)\times[0,l]\to M$ that satisfies
\[C_i(0,t)=c(t), \qquad 
C_i(r,0)=p, \qquad
C_i(r,l)\in\partial \overline{B(p,l)},
\]
and
\[
\left. \odv[order=1]{C_i}{r} \right|_{r=0}=W_i(t). \]

Since $c$ is a minimizing geodesic between $p$ and $q$, Equation \eqref{weighted energy variation} gives us

\begin{equation}\label{weighted energy variation 2}
\begin{split}
   0\leq & \int_0^L 
        \left(\abs{\nabla_{\dot\gamma}V_i}^2 - g\left(R^f(V_i,\dot\gamma)\dot\gamma, V_i\right)\right)\ds \\
        & \; + \left. \odif{f} \left(\dot{\gamma}(s)\right) |V_i(s)|^2\right|_{s=0}^{s=L}
        + \left.g\left( \odv[order=2]{C_i}{r}(0,t), c'(t) \right)\right|_{t=0}^{t=l}.
\end{split}
\end{equation}

The first boundary term in (\ref{weighted energy variation 2}) becomes
\begin{align}
    \left.\odif{f} \left(\dot{\gamma}(s)\right) |V_i(s)|^2\right|_{s=0}^{s=L}
    = & \odif{f}\left(\dot{\gamma}(L)\right) g\left(V_i(L),V_i(L)\right) \nonumber\\
    = & e^{2f(q)}\odif{f}(-N)g(u_i,u_i)\nonumber\\
    = & -e^{2f(q)}\odif{f}(N).\label{sphere g bdy condition}
\end{align}

The second boundary term in (\ref{weighted energy variation 2}) becomes 

\begin{align}
    \left.g\left( \odv[order=2]{C_i}{r}(0,t), c'(t) \right)\right|_{t=0}^{t=l} 
    = & g\left(\nabla_{W_{i}(l)} W_{i}(l), c'(l)\right) \nonumber\\
    = & e^{2f(q)}g(\nabla_{u_i} u_i,-N) \nonumber\\
    = & e^{2f(q)}g(\nabla_{u_i} N, u_i) \nonumber\\
    = & e^{2f(q)}\II(u_i,u_i). \label{sphere f bdy condition}
\end{align}
Thus

\begin{align*}
    \left.\odif{f}\left(\dot{\gamma}(s)\right) |V_i(s)|^2\right|_{s=0}^{s=L}
        + & \left.g\left( \odv[order=2]{C_i}{r}(0,t), c'(t) \right)\right|_{t=0}^{t=l} \\
    & \qquad = -e^{2f(q)}\odif{f}(N) + e^{2f(q)}\II(u_i,u_i) \\
    & \qquad =  e^{2f(q)}\II^f(u_i,u_i),
\end{align*}
and so (\ref{weighted energy variation 2}) becomes 
\begin{equation}\label{weighted energy variation 3}
    e^{2f(q)}\II^f(u_i,u_i) \geq -\int_0^L 
        \left(\abs{\nabla_{\dot\gamma}V_i}^2 - g(R^f(V_i,\dot\gamma)\dot\gamma, V_i)\right)\ds.
\end{equation}

Summing over $i$ yields

\begin{align*}
    e^{2f(q)}\sum_{i=1}^k \II^f(u_i,u_i) 
    \geq & - \int_0^L 
    \frac{\pi^2 k}{4L^2}\cos^2\left(\frac{\pi s}{2L}\right) \\
    &\qquad
    - e^{4f(\gamma(s))}\sin^2\left(\frac{\pi s}{2L}\right) 
        \sum_{i=1}^k g\left( R^f(U_i,c')c',U_i \right) 
    \ds \\
    > & - \int_0^L 
    \frac{\pi^2k}{4L^2}\cos^2\left(\frac{\pi s}{2L}\right)
    - (k+\delta) \sin^2\left(\frac{\pi s}{2L}\right) 
    \ds \\
    = & \; \frac{4(k+\delta)L^2-\pi^2 k}{8L} \\
    > & \; 0,
\end{align*}
where the last inequality is due to \eqref{sphere L bound}. Therefore $\sum_{i=1}^k \II^f(u_i,u_i)>0$, meaning that $\partial \overline{B(p,l)}$ is weighted $k$-convex. By Lemma \ref{Wylie-Xu lemma}, $M\setminus B(p,l)$ has the homotopy type of a CW complex obtained by attaching cells of dimension at least $\floor{n/2} + 1$ to $S^{n-1}$. This gives that $\pi_i(M\setminus B(p,l)) =0$ for all $i\in \{1,\cdots,\floor{n/2}\}$ and since $M$ is obtained from $M\setminus B(p,l)$ by attaching the $n$-cell $\overline {B(p,l)}$ to $S^{n-1}$, we have $\pi_i(M) = 0$ for all $i\in\{1,\cdots,\floor{n/2}\}$. By Hurewicz's Theorem, Poincar\'e Duality, and the assumption $M$ is orientable, we get $H_0(M,\mathbb Z) = H_n(M,\mathbb Z) = \mathbb Z$, and $H_i(M,\mathbb Z) = 0$ for all $i\in\{1,\cdots,n-1\}$. Therefore, $M$ is a homology sphere and by the Generalized Poincar\'e theorem, $M$ is homeomorphic to $S^n$. 
\end{proof}




%% file: 5-Example.tex
Proposition 3.11 in \cite{KW_weighted_2017} constructs a rotationally symmetric metric on $S^n$ with positive weighted sectional curvature but not nonnegative sectional curvature. To illustrate how Theorem \ref{main thm A} generalizes Theorem \ref{wilhelm synge theorem}, we give an example of a metric on $S^n$ that has positive weighted $2^{\text{nd}}$-intermediate Ricci curvature but not non-negative $2^{\text{nd}}$-intermediate Ricci curvature.  

\begin{lemma}\label{example verify lemma}
    Let $n\geq 4$, and $(M,g,f)$ be an $n$-dimensional smooth metric measure space such that $g=\dt^2 + \phi^2(t)g_{S^{n-1}}$ is a warped product metric and $f,\phi:[0,\pi]\rightarrow \mathbb{R}$ are smooth functions that depend only on $t$. Then $M$ will satisfy $\ric_2^f>\lambda$ if and only if
    \begin{align*}
        \lambda \;<\;\; & 
        2\left(f''+(f')^2 
        - \frac{\phi''}{\phi}\right), \\[1ex]
        \lambda \;<\;\; & 
        \frac{1-(\phi')^2}{\phi^2}
        + \frac{2f'\phi' - \phi''}{\phi}, \\[1ex]
        \lambda \;<\;\; & 2\left( \frac{1-(\phi')^2}{\phi^2}
        + \frac{f'\phi'}{\phi} \right).
    \end{align*}
\end{lemma}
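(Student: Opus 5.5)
The plan is to compute, at a point with $t\in(0,\pi)$, the weighted intermediate Ricci curvature $\sum_{i=1}^2\sec^f(U,V_i)$ exactly in terms of $\phi$ and $f$. Then I will show that its minimum over all admissible $(U,V_1,V_2)$ is the minimum of the three right-hand sides in the statement. Since $\ric^f_2>\lambda$ means every such sum exceeds $\lambda$, the ``if and only if'' follows.

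\emph{Step 1: curvature and Hessian of the warped product.} Write $T=\partial_t$ and let $X,Y$ be orthonormal vectors tangent to the $S^{n-1}$ factor. Set
\[
a=-\frac{\phi''}{\phi},\qquad b=\frac{1-(\phi')^2}{\phi^2},\qquad A=f''+(f')^2,\qquad B=\frac{f'\phi'}{\phi}.
\]
I will use the standard facts about $g=\dt^2+\phi^2 g_{S^{n-1}}$:
\begin{itemize}
\item the curvature operator is diagonal in the basis $\{T\wedge X,\ X\wedge Y\}$, with eigenvalue $a$ on the mixed planes and $b$ on the tangential planes;
\item $\hess_f(T,T)=f''$, $\hess_f(T,X)=0$ and $\hess_f(X,X)=B$;
\item $\odif{f}(T)=f'$ and $\odif{f}(X)=0$.
\end{itemize}
By rotational symmetry, a general unit vector has the form $U=\cos\theta\,T+\sin\theta\,X$. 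For it,
\[
\hess_f(U,U)+\odif{f}(U)^2=\cos^2\theta\,A+\sin^2\theta\,B=:H(\theta).
\]

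\emph{Step 2: reduce to eigenvalues of the Jacobi operator.} Fix $U$. By Definition \ref{pos wgt int ricci defn} and \eqref{weighted-sectional},
\[
\sum_{i=1}^2\sec^f(U,V_i)=\sum_{i=1}^2 g\bigl(R(V_i,U)U,V_i\bigr)+2H(\theta).
\]
The first sum is the trace of the symmetric operator $R_U=R(\cdot,U)U$ on the $2$-plane spanned by $V_1,V_2$ in $U^\perp$. Hence its infimum over orthonormal pairs is the sum of the two smallest eigenvalues of $R_U$ on $U^\perp$. From the diagonal curvature operator, $R_U$ has:
\begin{itemize}
\item eigenvalue $a$ on $-\sin\theta\,T+\cos\theta\,X$;
\item eigenvalue $a\cos^2\theta+b\sin^2\theta$ on the $(n-2)$-dimensional space spanned by the tangential directions orthogonal to $X$.
\end{itemize}
Since $n\ge4$, this second eigenspace has dimension at least $2$. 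Therefore the minimal sum for this $U$ is the smaller of
\[
\Phi_1(\theta)=a+a\cos^2\theta+b\sin^2\theta+2H(\theta),\qquad \Phi_2(\theta)=2\bigl(a\cos^2\theta+b\sin^2\theta\bigr)+2H(\theta).
\]

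\emph{Step 3: minimize over $\theta$.} Both $\Phi_1$ and $\Phi_2$ are affine in $\cos^2\theta\in[0,1]$, so their minima occur at $\theta=0$ or $\theta=\pi/2$.
\begin{itemize}
\item At $\theta=0$, both equal $2a+2A$, which is the first expression in the statement.
\item At $\theta=\pi/2$, $\Phi_1=a+b+2B$, which is the second expression.
\item At $\theta=\pi/2$, $\Phi_2=2b+2B$, which is the third expression.
\end{itemize}
So the infimum over all frames at a given point is the minimum of the three expressions. Each of them is actually realized by a frame (for instance $U=T$, $V_i=X,Y$). This gives both directions of the equivalence at every interior point.

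The step I expect to need the most care is Step 2: correctly identifying $R_U$ for a tilted $U$ and justifying that the minimum of the trace over $2$-planes is the sum of the two lowest eigenvalues. The latter is Ky Fan's principle. A smaller issue is the poles $t=0,\pi$, where the expressions are singular. There I would argue by smoothness and continuity: the curvature at a pole is the limit of the curvature at nearby interior points, so the conditions are read as holding on $(0,\pi)$ with the appropriate limits at the endpoints.
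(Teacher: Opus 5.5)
Your proof is correct, and it takes a different and more complete route than the paper.

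The paper quotes Lemma 3.5 of Kennard--Wylie for the weighted sectional curvatures of the three kinds of coordinate planes. It then observes that the three expressions in the statement are $\sec^f(U,V_1)+\sec^f(U,V_2)$ for the coordinate frames $(\partial_t;Y,Z)$, $(Y;\partial_t,Z)$ and $(Y;Z,W)$. That settles the ``only if'' direction. The ``if'' direction, that a tilted $U$ or a non-coordinate pair $V_1,V_2$ never gives a smaller sum, is left implicit. It does not follow formally from the sectional lemma, which controls the minimum of a single $\sec^f$, not of a sum of two.

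Your argument closes exactly that gap. You diagonalize $R_U$ for $U=\cos\theta\,\partial_t+\sin\theta\,X$, with eigenvalue $a$ once and $a\cos^2\theta+b\sin^2\theta$ with multiplicity $n-2\ge 2$. You then apply Ky Fan's principle and note that both candidate sums are affine in $\cos^2\theta$. This needs only the standard curvature operator and Hessian of the warped product, not the cited lemma. Both arguments use $n\ge4$ in the same place.

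Your remark on the poles also covers something the paper skips. Be aware that strict inequalities on $(0,\pi)$ pass only to non-strict ones in the limit. So at $t=0,\pi$ the condition has to be imposed on the limiting value itself. At $t=0$ all three expressions tend to $2\bigl(f''(0)-\phi'''(0)\bigr)$, the isotropic value of $\ric^f_2$ there.
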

\begin{proof}
    Lemma 3.5 in \cite{KW_weighted_2017} establishes that $M$ satisfies $\sec^f>\lambda$ if and only if
    \begin{align}
    \lambda \;<\; \sec^f(\partial_t,Y)&=-\frac{\phi''}{\phi}+f''+(f')^2, \label{example verify 1}\\
    \lambda \;<\; \sec^f(Y,\partial_t)&=-\frac{\phi''}{\phi}+f'\frac{\phi'}{\phi}, \label{example verify 2}\\
    \lambda \;<\; \sec^f(Y,Z)&=\frac{1-(\phi')^2}{\phi^2}+f'\frac{\phi'}{\phi},\label{example verify 3}
\end{align}
    for all orthonormal pairs $Y,Z$ tangent to $S^{n-1}$. Each inequality in the statement of the lemma is derived by taking the following sums:
    \begin{align*}
    \sec^f(\partial_t,Y) + \sec^f(\partial_t,Z), \\
    \sec^f(Y,\partial_t) + \sec^f(Y,Z), \\
    \sec^f(Y,Z) + \sec^f(Y,W), \\
    \end{align*}
    where $W$ is also tangent to $S^{n-1}$ and $\{Y,Z,W\}$ is an orthonormal set.
\end{proof}

\begin{prop}\label{the example}
    For every $n\geq4$, $S^n$ admits a rotationally symmetric metric $g$ that supports a function $f$ such that $\ric^f_2>0$, but fails to satisfy $\ric_2\geq 0$.
\end{prop}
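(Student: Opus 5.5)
We would follow the strategy of Proposition 3.11 in \cite{KW_weighted_2017}: perturb the round metric $\phi(t)=\sin t$ on $S^n$ near the equator so that $\phi''>0$ somewhere, and pair it with an explicit weight $f$. Positivity of $\ric^f_2$ is then checked with Lemma \ref{example verify lemma} at $\lambda=0$. Failure of $\ric_2\ge 0$ is checked with the same lemma at $f=0$: wherever $\phi''>0$, the sum $\sec(\partial_t,Y)+\sec(\partial_t,Z)=-2\phi''/\phi$ is negative.

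\textbf{The weight.} We take $f(t)=a\cos(2t)$ with a small constant $a$, say $a=1/6$. This $f$ is smooth on $S^n$ because it is an even function of the distance to each pole. For the round profile the three quantities in Lemma \ref{example verify lemma} are then explicit. The first is
\[2\left(f''+(f')^2+1\right)=2\left(1-4a\cos 2t+4a^2\sin^2 2t\right)\ge 2(1-4a).\]
The second is $2-8a\cos^2 t$, and the third is $2(1-4a\cos^2 t)$. All three are bounded below by a positive constant, roughly $2/3$, uniformly on $[0,\pi]$. Near $t=\pi/2$ we also have $f''+(f')^2\approx 4a=2/3$.

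\textbf{The perturbation.} We set $\phi=\sin t+\eta(t)$, where $\eta$ is supported in a tiny interval $(\pi/2-\delta,\pi/2+\delta)$ and $\eta''$ is chosen so that $\phi''(\pi/2)=1/2$ and $|\phi''|\le 1/2$ on the support. This forces $|\eta''|\le 3/2$, hence $|\eta'|\lesssim\delta$ and $|\eta|\lesssim\delta^2$. So $\phi$ is $C^1$-close to $\sin t$ and stays near $1$ on the support, and the metric near the poles is untouched, so it still closes up smoothly to $S^n$. At $t=\pi/2$ we get $-2\phi''/\phi\approx-1<0$, so $\ric_2\ge 0$ fails.

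\textbf{Main obstacle: the support of $\eta$.} Since $\eta''$ is not small there, the $\phi''$ terms must be handled directly rather than perturbatively.
\begin{itemize}
\item The third inequality involves only $\phi$ and $\phi'$, so it survives the $C^1$-small change.
\item The second inequality becomes approximately $1+2f'\phi'/\phi-\phi''/\phi$. Here $|\phi'|,|f'|=O(\delta)$ and $\phi''\le 1/2$, so it is at least about $1/2>0$.
\item The first inequality needs $f''+(f')^2>\phi''/\phi\approx\phi''\le 1/2$. Near $\pi/2$ the left side is about $2/3$, so this holds. This is exactly why $\phi''$ is capped at $1/2$ and $a$ is taken larger than $1/8$, while keeping $a<1/4$ for positivity away from the bump.
\end{itemize}
Outside the support all three quantities keep their round values, which are positive by the weight computation. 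Choosing $\delta$ small enough makes every error term smaller than these margins, which finishes the construction.
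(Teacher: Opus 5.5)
Your construction is correct and takes a different route from the paper. The paper writes down one closed-form pair, $\phi(t)=\sin t\,(1+\tfrac14\sin^2 t)$ and $f(t)=\tfrac12\sin^2 t$, for which $\phi''>0$ near the poles (where $\sin^2 t<\tfrac29$). It then checks the three inequalities of Lemma \ref{example verify lemma} by exact algebra in $\sin^2 t$.

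You instead leave the round profile untouched except in a $\delta$-window at the equator. You take $f=\tfrac16\cos 2t$, which up to an irrelevant additive constant is $-\tfrac13\sin^2 t$. This is the opposite sign to the paper's weight, so $f''+(f')^2$ is large at the equator rather than at the poles. You then close the argument with $\delta$-small error estimates. The mechanism is the same in both proofs: the term $f''+(f')^2$ in $\sec^f(\partial_t,\cdot)$ absorbs a positive $\phi''$.

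The paper's route gives a fully explicit, analytic example with no limiting argument and no bump-function bookkeeping. Your route explains why such examples exist and shows they are robust: any $a\in(\tfrac18,\tfrac14)$ works, with any cap on $\phi''$ below $4a$. The price is that the example is not explicit.

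One step needs repair: you cannot have $|\phi''|\le\tfrac12$ on the support of $\eta$. At the endpoints of the support all derivatives of $\eta$ vanish, so $\phi''=-\sin t\approx-1$ there. Moreover, $\int\eta''=0$ forces $\eta''<0$ somewhere, whereas $\phi''\ge-\tfrac12$ near $\pi/2$ would force $\eta''>0$ on the whole window. Since $\phi''$ enters the three inequalities only through $-\phi''/\phi$, the one-sided bound $\phi''\le\tfrac12$ is all you need.

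With only the one-sided bound, the uniform bound on $\eta''$ no longer follows from the cap, so build it in. For example, take $\eta(t)=\delta^2\beta\bigl((t-\pi/2)/\delta\bigr)$ for a fixed $\beta\in C_c^\infty(-1,1)$ with $\beta''(0)>1$ and $\sup\beta''<\tfrac32$. Such a $\beta$ exists: integrate twice an even $h\in C_c^\infty(-1,1)$ with $\int h=0$. This gives $|\eta'|=O(\delta)$ and $|\eta|=O(\delta^2)$. It also gives $\phi''(\pi/2)=\beta''(0)-1>0$, and $\phi''\le\sup\beta''-\cos\delta<\tfrac12$ for small $\delta$. With this choice the rest of your estimates go through unchanged.
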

\begin{proof}
    For $t\in[0,\pi]$, let $g=\dt^2 + \phi^2(t)g_{S^{n-1}}$ be a metric for $S^n$, where 
    \begin{equation}\label{example phi}
    \phi(t) = \sin t\left( 1+ \frac{1}{4}\sin^2 t \right),
    \end{equation}
    and let
    \begin{equation}\label{example density}
        f(t) = \frac{1}{2}\sin^2 t.
    \end{equation}
    Then 
    \begin{align*}
    2\left(f''+(f')^2-\frac{\phi''}{\phi}\right)
    &= 2\left(
    1-2\sin^2 t+\sin^2 t\cos^2 t
    -\frac{2-9\sin^2 t}{4+\sin^2 t}
    \right) \\
    &= \frac{2\left(2+\sin^2 t\cos^2 t(6+\sin^2 t)\right)}
    {4+\sin^2 t}
    >0,
    \end{align*}
    and
    \begin{align*}
    \frac{1-(\phi')^2}{\phi^2}
    +\frac{2f'\phi'-\phi''}{\phi}
    &= \frac{-8+15\sin^2 t+9\sin^4 t}{(4+\sin^2 t)^2} \\
    &\quad+\frac{2\cos^2 t(4+3\sin^2 t)-(2-9\sin^2 t)}
    {4+\sin^2 t} \\
    &= \frac{16+35\sin^2 t
    +2\sin^2 t\cos^2 t(7+3\sin^2 t)}
    {(4+\sin^2 t)^2}
    >0,
    \end{align*}
    and
    \begin{align*}
    2\left(
    \frac{1-(\phi')^2}{\phi^2}
    +\frac{f'\phi'}{\phi}
    \right)
    &= 2\left(
    \frac{-8+15\sin^2 t+9\sin^4 t}{(4+\sin^2 t)^2}
    +\frac{\cos^2 t(4+3\sin^2 t)}{4+\sin^2 t}
    \right) \\
    &= \frac{2\left(8+8\sin^2 t
    +\sin^2 t\cos^2 t(7+3\sin^2 t)\right)}
    {(4+\sin^2 t)^2}
    >0.
    \end{align*}
    By Lemma \ref{example verify lemma}, the smooth metric measure space $(M,g,f)$ has $\ric^f_2 >0$. 
    
    Finally, if $Y,Z$ are orthonormal vectors tangent to the spherical fiber, then
    \[\ric_2\left( \partial_t, \{Y,Z\} \right) 
    = -2\frac{\phi''}{\phi}
    = \frac{2(-2+9\sin^2 t)}{4+\sin^2 t}
    \]
    which is negative when $\sin^2 t<\frac{2}{9}$.
\end{proof}